\renewcommand\footnotemark{}
\newtheorem{theorem}{Theorem}[section]
\newtheorem{proposition}[theorem]{Proposition}
\newtheorem{remark}[theorem]{Remark}
\newtheorem{definition}[theorem]{Definition}
\definecolor{highlight}{rgb}{.5,0,.5}
\def\I{\hbox{\bf I}}
\def\bX{{\mathbb X}}
\def\diag{\hbox{\rm diag}}
\def\GL{{\rm GL}}
\def\Gal{{\rm Gal}}
\def\stab{{\rm stab}}
\def\Zero{{\rm Zero}}
\def\Mat{{\rm Mat}}
\def\max{{\rm max}}
\def\I{{\mathcal I}}
\def\H{{\mathcal H}}
\def\bV{\mathbb{V}}
\def\bZ{\mathbb{Z}}
\def\bI{\mathbb{I}}
\def\F{{\mathcal F}}
\def\calH{{\mathcal H}}
\def\bfc{{\mathbf c}}
\def\bfd{{\mathbf d}}
\def\bfm{{\mathbf m}}
\def\bfZ{{\mathbf Z}}
\def\bfchi{{\bm \chi}}
\def\frakm{{\mathfrak{m}}}
\newcommand{\Rmnum}[1]{\expandafter\@slowromancap\romannumeral #1@}
\begin{document}
\title{Note on the construction of Picard-Vessiot rings for linea differential equations}
\author{Ruyong Feng\footnote{This research was supported by NSFC under Grants No.11771433 and No.11688101, and Beijing Natural Science Foundation (Z190004).}}
\affil{KLMM, AMSS and UCAS,\\Chinese Academy of Sciences, Beijing 100190, China}
\date{}
%\thanks{2010 Mathematics Subject Classification. Primary 12H10; Secondary 13B05 }
%\author{Michael F. Singer}
%\affil{Department of Mathematics, North Carolina State University,\\ Box 8205, Raleigh, NC 27695, USA}
\maketitle
\begin{abstract}
In this note, we describe a method to construct the Picard-Vessiot ring of a given linear differential equation.
\end{abstract}
\section{Introduction}
Throughout this note, $C$ stands for an algebraically closed field of characteristic zero and $k=C(t)$ denotes the ring of rational functions in $t$. We use $\delta$ to denote the usual derivation with respective to $t$. Then $k$ is a differential ring with derivation $\delta$. $X$ denotes $n\times n$ matrix $(X_{i,j})$ with indeterminate entries $X_{i,j}$.
Consider linear differential equation
\begin{equation}
\label{EQ:differentialeqn}
 \delta(Y)=AY
\end{equation}
where $A\in \Mat_n(k)$. Denote the ring over $k$ generated by the entries of $X$ and $1/\det(X)$ by $k[X,1/\det(X)]$. By setting $\delta(X)=AX$, we may endow $k[X,1/\det(X)]$ with a structure of differential ring that extends the differential ring $k$. An ideal $I$ of $k[X,1/\det(X)]$ is called a differential ideal (or $\delta$-ideal for short) if $\delta(I)\subset I$. Let $\frakm$ be a maximal $\delta$-ideal. Then the quotient ring
$$
   R=k(t)[X,1/\det(X)]/\frakm
$$
is the Picard-Vessiot ring of (\ref{EQ:differentialeqn}) over $k$. Recently, the construction of Picard-Vessiot rings receives many attentions. In this note, we shall describe a method to construct this ring, i.e. a method to compute a maximal $\delta$-ideal $\frakm$. Denote by $\Gal(R/k)$ the Galois group of (\ref{EQ:differentialeqn}) over $k$ that is defined to be the set of $k$-automorphisms of $R$ which commute with $\delta$. Let $\F$ be a fundamental matrix of (\ref{EQ:differentialeqn}) with entries in $R$. Then $\F$ induces an injective group homomorphism from $\Gal(R/k)$ to $\GL_n(C)$. The image of this homomorphism can be described via the stabilizer of the maximal $\delta$-ideal with $\F$ as a zero. 
\begin{definition}
\label{DEF:stabilizer}
For an ideal $I$ in $k[X,1/\det(X)]$, the stabilizer of $I$, denoted by $\stab(I)$, is defined to be
$$
   \{g\in \GL_n(C) | \forall\, P\in I, P(Xg)\in I\}.
$$
\end{definition}
It is well-known that $\stab(I)$ is an algebraic subgroup of $\GL_n(C)$. The following notion is introduced by Amzallag etc in \cite{amzallag-minchenko-pogudin}.
\begin{definition}
Let $G, H$ be two algebraic subgroups of $\GL_n(C)$. $H$ is called a toric envelope of $G$ if $H=T\cdot G$ (product as abstract groups) for some torus $T$.
\end{definition}
\begin{definition}
\label{DEF:protomaximalideals}
A radical $\delta$-ideal $I$ is called a toric maximal $\delta$-ideal of (\ref{EQ:differentialeqn}) over $k$ if there is a maximal $\delta$-ideal $\frakm$ containing $I$ such that $\stab(I)$ is a toric envelope of $\stab(\frakm)$.
\end{definition}
Roughly speaking, our method consists of the following three steps.
\begin{enumerate}
\item Compute a toric maximal $\delta$-ideal (\ref{EQ:differentialeqn}) over $k$, say $J$.
\item By $J$, compute a maximal $\delta$-ideal $I$ in $\bar{k}[X,1/\det(X)]$.
\item By $I$, compute a maximal $\delta$-ideal in $k[X,1/\det(X)]$.
\end{enumerate}

\section{Toric maximal $\delta$-ideals}
Let $\F$ be a fundamental matrix of (\ref{EQ:differentialeqn}) and $\nu$ a nonnegative integer. Denote by $\I(\F,\nu)$ the ideal in $k[X,1/\det(X)]$ generated by
\begin{equation}
\label{EQ:numaximalideals}
\{P\in k[X]_{\leq \nu} | P(\F)=0\}
\end{equation}
Remark that $\I(\F,\nu)$ is a $\delta$-ideal, in particular, $\I(\F,\infty)$ is a maximal $\delta$-ideal and $\stab(\I(\F,\infty))$ is the Galois group of (\ref{EQ:differentialeqn}) over $k$. For $S\subset k[X,1/\det(X)]$ and a field $K$, we shall use $\Zero(S)$ and $\bV_K(S)$ to denote the set of zeroes of $S$ in $\F\GL_n(C)$ and $\GL_n(K)$ respectively.
\begin{definition}
Let $G$ be an algebraic subgroup of $\GL_n(C)$ and $V$ an algebraic subvariety of $\GL_n(\bar{k})$. Assume that both $G$ and $V$ are defined over $k$. We call $V$ is a $G$-torsor over $k$ if $V=\alpha \cdot G(\bar{k})$ for some $\alpha\in V$.
\end{definition}
\begin{remark}
\label{RM:numaximalideal}
\begin{enumerate}
\item
One can easily verify that
$$
   \stab(\I(\F,\nu))=\{g\in\GL_n(C) | \forall\, P\in \I(\F,\nu), P(\F g)=0\}.
$$
Therefore $\F\stab(\I(\F,\nu))=\Zero(\I(\F,\nu))$.
\item For an ideal $I$ in $k[X,1/\det(X)]$, one can verify that $\stab(I)\subset \stab(\sqrt{I})$. Generally, the stabilizers of $I$ and $\sqrt{I}$ are not equal. For instance, set $n=2$ and
$$I=\langle x_{11}^2, x_{22}^2, x_{12}^3,x_{21}^3 \rangle.$$
Then $\sqrt{I}=\langle  x_{ij}: i,j=1,2\rangle.$  One has that $\stab(\sqrt{I})=\GL_2(C)$ but $\stab(I)=\{\diag(c_1,c_2)|c_1c_2\neq 0\}$. While for $I=\I(\F,\nu)$, an easy calculation yields that $\stab(I)=\stab(\sqrt{I})$.
\item Let $\mu$ be an integer greater than $\nu$. Then $\I(\F,\nu)\subset \I(\F,\mu)$ and From 1, one sees that $\stab(\I(\F,\mu))\subset \stab(\I(\F,\nu))$.
\end{enumerate}
\end{remark}
Let $\frakm$ be a maximal $\delta$-ideal and $G=\stab(\frakm)$. Theorem 1.28 on page 22 of \cite{vanderput-singer} states that $\bV_{\bar{k}}(\frakm)$ is a $G$-torsor over $k$. The following proposition implies that the similar property holds for $\bV_{\bar{k}}(\I(\F,\nu))$.
\begin{proposition}
\label{PROP:torsors}
$\bV_{\bar{k}}(\I(\F,\nu))$ is an $H$-torsor over $k$, where $H=\stab(\I(\F,\nu))$.
\end{proposition}
\begin{proof}
Let $K=\bar{k}(\F)$.
We first show that $\F H(K)=\bV_K(\I(\F,\nu))$. Let $S\subset C[X,1/\det(X)]$ be a finite set of polynomials defining $H$. Since $\I(\F,\nu)$ is generated by some polynomials with degree not greater than $\nu$, we may assume that elements of $S$ are of degree not greater than $\nu$. Then $\{P(\F^{-1}X)| P\in S\}$ defines $\F H(K)$ and it consists of polynomials with degree not greater than $\nu$. Let $J$ be the ideal in $k(\F)[X,1/\det(X)]$ generated by $S$. As $H$ contains the Galois group of (\ref{EQ:differentialeqn}) over $k$, one sees that $J$ is $\Gal(k(\F)/k)$-invariant. The proof of Lemma 1.29 on page 23 of \cite{vanderput-singer} implies that $J$ is generated by $J\cap k[X,1/\det(X)]$ and furthermore $J\cap k[X,1/\det(X)]$ is generated by some polynomials with degree not greater than $\nu$. Hence $J\cap k[X,1/\det(X)]\subset \I(\F,\nu)$. This implies that $\bV_K(\I(\F,\nu))\subset \F H(K)$. By Remark~\ref{RM:numaximalideal}, $\F H(K)\subset \bV_K(\I(\F,\nu))$. Therefore $\F H(K)=\bV_K(\I(\F,\nu))$. Suppose that $\alpha\in \bV_K(\I(\F,\nu))$. Then $\alpha=\F h$ for some $h\in H(K)$ and one has that 
$$\alpha H(K)=\F h H(K)=\F H(K)=\bV_K(\I(\F,\nu)).$$ From this, one sees easily that $\alpha H(\bar{k})=\bV_{\bar{k}}(\I(\F,\nu))$.
\end{proof}

\begin{definition}
\label{DEF:boundvarieties}
An algebraic subvariety $\bX$ of $\GL_n(C)$ is said to be bounded by $d$, where $d$ is a positive integer, if there are polynomials $f_1,\cdots,f_s\subset C[X]$ of degree at most $d$ such that
$$
   \bX=\GL_n(C)\cap \bV_C(f_1,\cdots,f_s).
$$
\end{definition}
Set
$$
\bfd(n)=\begin{cases}
               6, & n=2\\
               360, & n=3\\
               (4n)^{3n^2}, & n\geq 4
\end{cases}.
$$
\begin{theorem}[Theorems 3.1, 3.2 and 3.3 of \cite{amzallag-minchenko-pogudin}] Let $G\subset \GL_n(C)$ be a linear algebraic group. Then there exists a toric envelope of $G$ bounded by $\bfd(n)$.
\end{theorem}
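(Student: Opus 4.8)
This is the conjunction of Theorems 3.1, 3.2 and 3.3 of \cite{amzallag-minchenko-pogudin}, which between them treat the cases $n=2$, $n=3$ and $n\geq 4$ and give the constants $6$, $360$ and $(4n)^{3n^2}$ respectively; we only indicate the strategy, and only for the generic case $n\geq 4$, referring to \cite{amzallag-minchenko-pogudin} for the details.

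First observe that if $T$ is any torus contained in the centralizer $C_{\GL_n(C)}(G)$, then the multiplication map $T\times G\to\GL_n(C)$ is a homomorphism of algebraic groups, so its image $H:=T\cdot G$ is a closed subgroup and, by construction, a toric envelope of $G$. It therefore suffices to exhibit such a $T$ for which $H$ can be cut out in $\GL_n(C)$ by polynomials of degree at most $\bfd(n)$. A torus has to be introduced at all because the degree of $G$ itself is \emph{not} bounded in terms of $n$: already $\{\diag(s,s^N):s\in C\}\subset\GL_2(C)$ and $\langle\diag(1,\ldots,1,\zeta_N)\rangle\subset\GL_n(C)$ require equations of degree $\sim N$. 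These obstructions are diagonalizable in nature, and the key point is that they are absorbed once one multiplies by a large enough torus commuting with $G$.

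The plan is to bound the defining degree of $H$ along the structure of $G$. Write $G^{\circ}=R_u(G^{\circ})\rtimes L$ with $R_u(G^{\circ})$ the unipotent radical and $L$ reductive, and $L=S\cdot Z(L)^{\circ}$ with $S=[L,L]$ semisimple. One handles the pieces separately. In characteristic zero $R_u(G^{\circ})$ is the image under the exponential of its nilpotent Lie algebra, and since the logarithm is then polynomial of degree $<n$, this subgroup is defined by equations of degree at most $n$. Connected semisimple subgroups of $\GL_n(C)$ fall into finitely many conjugacy classes — the discrete data being a root datum of rank $\leq n$ together with a faithful representation of dimension $\leq n$ — so $S$ is defined by equations of degree bounded by an explicit function of $n$ alone. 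The unbounded contributions come only from the diagonalizable data: the torus $Z(L)^{\circ}$, the finite group $S\cap Z(L)^{\circ}$, and the component group $G/G^{\circ}$, all of which may sit in $\GL_n(C)$ with large weights. Choosing $T$ to be a maximal torus of $C_{\GL_n(C)}(G)$, one decomposes $C^n$ into its $G$-isotypic components and checks that $T$ absorbs exactly the inter-block twisting responsible for the pathology (within a single isotypic block Schur's lemma already forces the central torus to act by scalars); consequently $H=T\cdot G$ has the same semisimple part and unipotent radical as $G$ but a "saturated" diagonalizable part, now definable by equations of degree bounded in $n$ only. Assembling the three bounds yields an explicit bound on the geometric degree and the dimension of $H$; combining this with an effective bound on the degrees of equations cutting out a variety of given degree and dimension (using that $H$, being a group in characteristic zero, is smooth) produces the bound $(4n)^{3n^2}$. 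For $n=2,3$ one argues instead by directly enumerating the subgroups of $\GL_n(C)$ up to the relevant equivalence, obtaining the sharper constants $6$ and $360$.

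The main obstacle is the diagonalizable part: one has to prove that the twisting of $Z(L)^{\circ}$, of $S\cap Z(L)^{\circ}$ and of $G/G^{\circ}$ inside $\GL_n(C)$ can always be straightened by a torus lying in $C_{\GL_n(C)}(G)$, and then bound the resulting degree uniformly in $n$. This rests on a lemma on toric envelopes of diagonalizable subgroups of $\GL_n(C)$, combined with an induction along the derived/Levi filtration of $G$ that carefully tracks how the (bounded) representation theory of the semisimple part $S$ restricts the admissible tori. Everything else is bookkeeping with explicit constants from elimination theory.
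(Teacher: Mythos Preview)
The paper does not prove this theorem at all: it is stated purely as a citation of Theorems~3.1, 3.2 and 3.3 of \cite{amzallag-minchenko-pogudin}, with no argument given. Your write-up therefore already goes well beyond what the paper itself does, and as a high-level summary of the strategy in \cite{amzallag-minchenko-pogudin} it is broadly accurate (torus in the centralizer, Levi decomposition, bounded degrees for the unipotent and semisimple pieces, absorption of the diagonalizable pathology by the torus, and direct classification for $n=2,3$). For the purposes of this note, a one-line reference to \cite{amzallag-minchenko-pogudin} is all that is required.
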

\begin{proposition}
\label{PROP:torimaximals}
The radical of $\I(\F,\bfd(n))$ is a toric maximal $\delta$-ideal.
\end{proposition}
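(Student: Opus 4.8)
The plan is to verify Definition~\ref{DEF:protomaximalideals} using the maximal $\delta$-ideal $\frakm=\I(\F,\infty)$. Put $\nu=\bfd(n)$ and $G=\stab(\I(\F,\infty))=\Gal(R/k)$. First I would dispose of the routine points: $\sqrt{\I(\F,\nu)}$ is a radical $\delta$-ideal, since the radical of a $\delta$-ideal is again a $\delta$-ideal in characteristic zero; $\frakm$ is a maximal $\delta$-ideal and, being the kernel of the surjection onto the Picard-Vessiot domain $R$, is prime and hence radical; by Remark~\ref{RM:numaximalideal}(3) one has $\frakm\supseteq\I(\F,\nu)$, so $\frakm\supseteq\sqrt{\I(\F,\nu)}$; and $\stab(\sqrt{\I(\F,\nu)})=\stab(\I(\F,\nu))$ by Remark~\ref{RM:numaximalideal}(2). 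Thus everything reduces to the single claim that $H:=\stab(\I(\F,\nu))$ is a toric envelope of $G$.

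The crux is a concrete description of $H$: I claim it is the smallest algebraic subgroup of $\GL_n(C)$ that contains $G$ and is bounded by $\nu$ (equivalently, the subgroup of $\GL_n(C)$ cut out by the polynomials of degree at most $\nu$ vanishing on $G$). The inclusion $G\subseteq H$ is Remark~\ref{RM:numaximalideal}(3). For the rest I would use that the substitution $X\mapsto\F X$, together with its inverse $X\mapsto\F^{-1}X$, preserves total degree in $X$ and converts defining equations of the torsor $\F G=\Zero(\frakm)$ into defining equations of $G$, and back. By Proposition~\ref{PROP:torsors}, $\bV_{\bar k}(\I(\F,\nu))$ is the $H$-torsor $\F H$, which is cut out by polynomials of degree at most $\nu$; applying $X\mapsto\F X$ and then descending the coefficients from $\bar k(\F)$ to $C$ (by faithfully flat descent, as in the proof of Proposition~\ref{PROP:torsors}, which is the analogue of Lemma~1.29 of \cite{vanderput-singer}) shows that $H$ is cut out inside $\GL_n(C)$ by finitely many polynomials of degree at most $\nu$, necessarily vanishing on $G$. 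Conversely, if $H'$ is any algebraic subgroup of $\GL_n(C)$ containing $G$ and bounded by $\nu$, then its defining polynomials, of degree $\le\nu$, become after $X\mapsto\F^{-1}X$ polynomials of degree $\le\nu$ vanishing on $\F G$, hence lying in $\I(\F,\nu)$, hence vanishing on $\F H$; evaluating on $\F H$ then gives $H\subseteq H'$. This proves the minimality.

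To finish I would invoke the theorem of Amzallag, Minchenko and Pogudin stated above. In its sharp form --- that the subgroup of $\GL_n(C)$ cut out from $G$ by equations of degree at most $\bfd(n)$ is a toric envelope of $G$ --- it says exactly that our $H$ is a toric envelope of $G$, and the proof is complete. If only the existence statement as quoted is available, one gets a toric envelope $H_0=T\cdot G$ bounded by $\nu$, hence $G\subseteq H\subseteq H_0$ by minimality; writing $h\in H$ as $h=tg$ with $g\in G\subseteq H$ forces $t\in T\cap H$, so $H=(T\cap H)\cdot G$ with $T\cap H$ a diagonalizable group, and one would like to replace $T\cap H$ by its identity component --- a genuine torus --- and conclude $H=(T\cap H)^{\circ}\cdot G$ from minimality. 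The delicate point, and the one I expect to be the main obstacle, is that this requires the ``sub-toric-envelope'' $(T\cap H)^{\circ}\cdot G$ to be again bounded by $\nu$; boundedness by a fixed degree is not inherited by subgroups, so it cannot be read off from $H_0$ and must be supplied by the structural analysis of \cite{amzallag-minchenko-pogudin}. (Indeed the proposition is essentially equivalent to the sharp form of that theorem, so no purely formal manipulation of the weak form can avoid this input.) Granting it, the argument is complete and, with the machinery of Section~2 in hand, short.
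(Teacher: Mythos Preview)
Your approach is essentially the paper's: establish the sandwich $G\subseteq H:=\stab(\I(\F,\bfd(n)))\subseteq H_0$ for some toric envelope $H_0$ of $G$ bounded by $\bfd(n)$, then conclude that $H$ is itself a toric envelope. Two points deserve comment.

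First, the detour through the ``minimality'' characterization of $H$ is unnecessary, and your justification of it has a slip. When you substitute $X\mapsto\F^{-1}X$ into the degree-$\le\nu$ defining equations of $H'$, the resulting polynomials have coefficients in $k(\F)$, not in $k$, so ``hence lying in $\I(\F,\nu)$'' does not follow as stated. The paper avoids this entirely: given a toric envelope $H_0$ bounded by $\bfd(n)$, it forms $\tilde J=\{P\in k[X,1/\det(X)]:P(\F h)=0\ \forall\,h\in H_0\}$ and invokes Lemma~2.1 of \cite{feng} (exactly the descent you used in the other direction) to see that $\tilde J$ is generated in degrees $\le\bfd(n)$, whence $\tilde J\subseteq\I(\F,\bfd(n))$; passing to zero sets in $\F\GL_n(C)$ gives $\F H\subseteq\F H_0$, which is the sandwich. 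No separate ``$H$ is bounded by $\nu$'' statement is needed.

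Second, and more to the point, the endgame is simpler than you fear. The paper does not attempt to show that $(T\cap H)^\circ\cdot G$ is again bounded by $\bfd(n)$ and then appeal to minimality. Instead it cites Lemma~5.1 of \cite{amzallag-minchenko-pogudin}, a purely group-theoretic statement: any algebraic subgroup sandwiched between $G$ and a toric envelope of $G$ is itself a toric envelope of $G$. With that lemma the sandwich $G\subseteq H\subseteq H_0$ finishes the proof immediately, and your worry about degree bounds not being inherited by subgroups never arises.
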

\begin{proof}
Set $J=\sqrt{\I(\F,\bfd(n))}$ and $\frakm=\I(\F,\infty)$. Then $J$ is a radical $\delta$-ideal and $\stab(\frakm)\subset \stab(\I(\F,\bfd(n)))=\stab(J)$. Let $H$ be a toric envelope of $\stab(\frakm)$ bounded by $\bfd(n)$, and denote
$$
   \tilde{J}=\{P\in k[X,1/\det(X)] | \forall\, h\in H, P(\F h)=0 \}.
$$
Then $\F H=\Zero(\tilde{J})$ and moveover Lemma 2.1 of \cite{feng} implies that $\tilde{J}$ is generated by some polynomials in $k[X]$ of degree not greater than $\bfd(n)$. In other words, $\tilde{J}\subset \I(\F,\bfd(n))$. One sees that
$$\Zero(\frakm)\subset \Zero(\I(\F,\bfd(n)))\subset \Zero(\tilde{J}).$$
By Remark~\ref{RM:numaximalideal}, we have that
$$
   \F \stab(\frakm) \subset \F \stab(\I(\F,\bfd(n)))\subset \F H
$$
which implies that $\stab(\frakm)\subset \stab(J)\subset H$. By Lemma 5.1 of \cite{amzallag-minchenko-pogudin}, $\stab(J)$ is also a toric envelope of $\stab(\frakm)$. So $J$ is a toric maximal $\delta$-ideal.
\end{proof}
The method described in Section 4.1 of \cite{feng} allows one to compute a $k$-basis of (\ref{EQ:numaximalideals}) i.e. a set of generators of $\I(\F,\nu)$. By Gr\"{o}bner bases computation one can compute the toric maximal $\delta$-ideal $\sqrt{\I(\F,\bfd(n))}$.
\section{Maximal $\delta$-ideals in $\bar{k}[X,1/{\rm det}(X)]$}
From now on, assume that we have already computed a set of generators of $\I(\F,\bfd(n))$.
Let $H=\stab(\I(\F,\bfd(n)))$ and $\alpha\in \bV(\I(\F,\bfd(n)))$. Then $\bV_{\bar{k}}(\I(\F,\bfd(n)))=\alpha H(\bar{k})$. Denote by $\langle \I(\F,\bfd(n))\rangle_{\bar{k}}$ the ideal in $\bar{k}[X,1/\det(X)]$ generated by $\I(\F,\bfd(n))$ and decompose $\sqrt{\langle \I(\F,\bfd(n)) \rangle_{\bar{k}}}$ into prime ideals:
$$
  \sqrt{\langle\I(\F,\bfd(n))\rangle_{\bar{k}}}=Q_1\cap \cdots \cap Q_s.
$$
Remark that the above decomposition can be done over the field $k(\alpha)$. Precisely, decompose $\sqrt{\langle \I(\F,\bfd(n))\rangle_{k(\alpha)}}$ into prime ideals in $k(\alpha)[X,1/\det(X)]$, say $\tilde{Q}_1,\cdots,\tilde{Q}_s$.
Then each $\bV(\tilde{Q}_i)$ is an $H^\circ$-torsor over $k(\alpha)$. Hence $\tilde{Q}_i$ generates a prime ideal in $\bar{k}[X,1/\det(X)]$ and then from these $\tilde{Q}_i$ we obtain $Q_i$. Without loss of generality, we assume that $\alpha \in \bV(Q_1)$. Denote $R=\bar{k}[X,1/\det(X)]/Q_1$. Remark that $R$ is a $\delta$-ring and all $Q_i$ are $\delta$-ideals. Let $\bI_{\bar{k}}(H^\circ)$ be the vanishing ideal of $H^\circ$ in $\bar{k}[X,1/\det(X)]$. Consider the map
\[
\begin{array}{cccc}
   \phi: & \bar{k}[X,1/\det(X)]/\bI_{\bar{k}}(H^\circ) &\longrightarrow & R \\[2mm]
                                        & P(X) &\longrightarrow & P(\alpha^{-1} X)
\end{array}.
\]
One sees that $\phi$ is an isomorphism of $\bar{k}$-algebras.
\begin{definition}
An element $h\in R$ is said to be hyperexponential over $\bar{k}$ if $h$ is invertible in $R$ and $\delta(h)=\lambda h$ for some $\lambda\in \bar{k}$. Suppose that $h_1$ and $h_2$ are hyperexponential over $\bar{k}$. We say $h_1$ and $h_2$ are similar if $h_1=rh_2$ for some $r\in \bar{k}$.
\end{definition}
The following proposition reveals the relation between hyperexponential elements of $R$ and characters of $H^\circ$.
\begin{proposition}
\label{PROP:relations}
If $\chi$ is a character of $H^\circ$, then $\chi(\alpha^{-1}\bar{X})$ is hyperexponential over $\bar{k}$ where $\bar{X}$ denotes the image of $X$ in $R$ under the natural homomorphism. Conversely, if $h\in R$ is hyperexponential over $\bar{k}$ then $h=r\chi(\alpha^{-1}\bar{X})$ for some character $\chi$ and some $r\in \bar{k}$.
\end{proposition}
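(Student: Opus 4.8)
\medskip
\noindent\emph{Proof sketch.}\quad The plan is to transport the situation through the $\bar{k}$-algebra isomorphism $\phi$ and then to identify the hyperexponential elements of $R$ with the units of $R$. Put $Z:=\alpha^{-1}\bar{X}\in\Mat_n(R)$; under $\phi$ this is the generic point of $H^\circ$, so that $R$ is the coordinate ring $\bar{k}[H^\circ]$ with $Z$ as its matrix of coordinate functions. Since $\alpha$ has entries in $\bar{k}$ and $\delta(\bar{X})=A\bar{X}$, one computes $\delta(Z)=BZ$ with $B:=\alpha^{-1}A\alpha-\alpha^{-1}\delta(\alpha)\in\Mat_n(\bar{k})$. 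For $g\in H^\circ(C)$ the substitution $\bar{X}\mapsto\bar{X}g$ preserves $Q_1$ (because $\alpha H^\circ g=\alpha H^\circ$), hence defines a $\bar{k}$-algebra automorphism $\rho_g$ of $R$ which, under $\phi$, is right translation by $g$ on $\bar{k}[H^\circ]$; moreover $\rho_g$ commutes with $\delta$, since both fix $\bar{k}$ and $\delta(\bar{X}g)=A\bar{X}g=\rho_g(\delta\bar{X})$ (as $\delta(g)=0$). Finally, because $H^\circ(C)$ is Zariski dense in $H^\circ$ and the right-translation invariants of $\bar{k}[H^\circ]=\bar{k}\otimes_C C[H^\circ]$ are just $\bar{k}$, the common fixed ring $\{r\in R:\rho_g(r)=r\text{ for all }g\in H^\circ(C)\}$ equals $\bar{k}$.

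For the first statement, let $\chi$ be a character of $H^\circ$. Then $\chi(\alpha^{-1}\bar{X})=\chi(Z)$ is the image under $\phi$ of the unit $\chi\in\bar{k}[H^\circ]^\times$, so it is invertible in $R$; and since $\chi$ is a homomorphism we have $\rho_g(\chi(Z))=\chi(Zg)=\chi(g)\chi(Z)$ with $\chi(g)\in C^\times$. Hence $v:=\delta(\chi(Z))/\chi(Z)\in R$ satisfies $\rho_g(v)=v$ for every $g\in H^\circ(C)$ (using that $\delta$ commutes with $\rho_g$ and that $\chi(g)$ is a $\delta$-constant), so $v\in\bar{k}$ by the first paragraph. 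Thus $\delta(\chi(\alpha^{-1}\bar{X}))=v\,\chi(\alpha^{-1}\bar{X})$ with $v\in\bar{k}$ and $\chi(\alpha^{-1}\bar{X})$ invertible, i.e. $\chi(\alpha^{-1}\bar{X})$ is hyperexponential over $\bar{k}$.

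For the converse, let $h\in R$ be hyperexponential over $\bar{k}$; in particular $h\in R^\times$. As $H^\circ$ is connected, Rosenlicht's theorem on units of connected linear algebraic groups gives $\bar{k}[H^\circ]^\times=\bar{k}^\times\cdot\{\text{characters of }H^\circ\}$; transporting through $\phi$, every unit of $R$ has the form $r\,\chi(\alpha^{-1}\bar{X})$ with $r\in\bar{k}^\times\subset\bar{k}$ and $\chi$ a character, and applying this to $h$ gives the claim. (Conversely any such $r\,\chi(\alpha^{-1}\bar{X})$ is indeed hyperexponential, being a product of the hyperexponential elements $r\in\bar{k}^\times$ and $\chi(\alpha^{-1}\bar{X})$.)

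The step I expect to be the real obstacle is the converse direction, i.e. pinning down $R^\times$: the formal part — transporting through $\phi$, the compatibility of $\rho_g$ with $\delta$, and the identification of the fixed ring with $\bar{k}$ — is routine, but to conclude that a hyperexponential $h$ is a $\bar{k}$-multiple of some $\chi(\alpha^{-1}\bar{X})$ one genuinely needs an external input such as Rosenlicht's unit theorem. An alternative that avoids it is to prove first that the $\delta$-constants of $R$ are exactly $C$ and then argue dually: for hyperexponential $h$ the ratio $\rho_g(h)/h$ is a $\delta$-constant, so $g\mapsto\rho_g(h)/h$ defines a character $\chi$ of $H^\circ$, whence $h/\chi(\alpha^{-1}\bar{X})$ lies in the fixed ring $\bar{k}$; with this route the crux becomes the computation that $R$ has no $\delta$-constants beyond $C$.
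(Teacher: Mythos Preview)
Your proof is correct and follows essentially the same strategy as the paper: for the forward direction both arguments show that the logarithmic derivative $\delta(\chi(\alpha^{-1}\bar X))/\chi(\alpha^{-1}\bar X)$ is invariant under right translation by $H^\circ(C)$ (using multiplicativity of $\chi$ and that $C$-points are $\delta$-constants) and hence lies in $\bar k$, and for the converse both invoke Rosenlicht's unit theorem after transporting through $\phi$.

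The one presentational difference worth noting is in the forward direction. The paper introduces an auxiliary fundamental matrix $\bar\F=\F h\in\alpha H^\circ(\bar k(\F))$, evaluates the quotient $P(\bar X)=\chi(\bar X^{-1}\alpha)\,\delta(\chi(\alpha^{-1}\bar X))$ at the points $\bar\F g$ for $g\in H^\circ$, and then invokes Zariski density of $H^\circ$ in $H^\circ(\bar k(\F))$ to conclude $P$ is constant. Your route is the more intrinsic version of the same computation: you stay inside the differential ring $R$, use the $\bar k$-automorphisms $\rho_g$ directly, and read off the constancy from the identification of the $\rho_g$-fixed ring with $\bar k$. This avoids the detour through $\bar\F$ and the extension to $\bar k(\F)$-points, at the (small) cost of having to state explicitly that $\rho_g$ commutes with $\delta$ and that the fixed ring is $\bar k$; both facts are routine and you justify them adequately. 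The alternative you sketch at the end (using that $R$ has $\delta$-constants $C$ to build $\chi$ from $g\mapsto\rho_g(h)/h$) is also valid and would give a Rosenlicht-free converse, though neither you nor the paper pursues it.
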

\begin{proof}
Obviously, $\chi(\alpha^{-1}\bar{X})$ is invertible. We shall prove that $\chi(\alpha^{-1}\bar{X})$ is hyperexponential over $\bar{k}$. Set 
$r=\delta(\chi(\alpha^{-1}\bar{X}))|_{\bar{X}=\alpha}.$
We claim that $$\delta(\chi(\alpha^{-1}\bar{X}))=r\chi(\alpha^{-1}\bar{X}).$$
Denote
\begin{align*}
   P(\bar{X})&=\chi(\bar{X}^{-1}\alpha)\delta(\chi(\alpha^{-1}\bar{X}))\\
   &=\chi(\bar{X}^{-1}\alpha)\sum_{i,j}\frac{\partial \chi}{\partial X_{i,j}}(\alpha^{-1}\bar{X})\left(\delta(\alpha^{-1})\bar{X}+\alpha^{-1}A\bar{X}\right)_{i,j}
\end{align*}
where for a matrix $M$, $M_{i,j}$ denotes its $(i,j)$-entry. 
Since $\F$ is a zero of $\sqrt{\langle\I(\F,\bfd(n))\rangle_{\bar{k}}}$, there is $h\in H$ such that $\F h\in \bV_{\bar{k}(\F)}(Q_1)=\alpha H^{\circ}(\bar{k}(\F))$. Set $\bar{\F}=\F h$. Then $\bV_{\bar{k}(\F)}(Q_1)=\bar{\F}H^\circ(\bar{k}(\F))$. We can view $P(\bar{X})$ as a regular function on $\bV_{\bar{k}(\F)}(Q_1)$, and then view $P(\bar{\F}\bar{X})$ as a regular function on $H^\circ(\bar{k}(\F))$, i.e. $P(\bar{\F}\bar{X})\in \bar{k}(\F)[X,1/\det(X)]/\bI_{\bar{k}(\F)}(H^\circ)$. An easy calculation implies that for any $g\in H^\circ$,
$$
   \delta(\chi(\alpha^{-1}\bar{X}))|_{\bar{X}=\bar{\F}g}=\delta(\chi(\alpha^{-1}\bar{\F}g))=\delta(\chi(\alpha^{-1}\bar{\F})\chi(g))=\delta(\chi(\alpha^{-1}\bar{\F}))\chi(g).
$$
which implies that for any $g\in H^\circ$, one has that
\begin{align*}
 P(\bar{\F}g)-P(\bar{\F})&=\chi(\bar{X}^{-1}\alpha)\delta(\chi(\alpha^{-1}\bar{X}))|_{\bar{X}=\bar{\F}g}-P(\bar{\F})\\
 &=\chi(g^{-1}\bar{\F}^{-1}\alpha)\delta(\chi(\alpha^{-1}\bar{\F}))\chi(g)-P(\bar{\F})\\
 &=P(\bar{\F})-P(\bar{\F})=0.
\end{align*}
Hence the subvariety of $H^\circ(\bar{k}(\F))$ defined by $P(\bar{\F}\bar{X})-P(\bar{\F})$ contains $H^\circ$.  One can verify that the subvariety of $H^\circ(\bar{k}(\F))$ containing $H^\circ$ must be $H^\circ(\bar{k}(\F))$ itself. Thus $P(\bar{\F}\bar{X})$ is equal to $P(\bar{\F})$ and so is $P(\bar{X})$.  Now assigning $\bar{X}=\alpha$ in $P(\bar{X})$ yields that
$$
  P(\bar{\F})=P(\alpha)=\delta(\chi(\alpha^{-1}\bar{X}))|_{\bar{X}=\alpha}=r.
$$
This proves our claim.

Conversely, assume that $h\in R$ is hyperexponential over $\bar{k}$. Then $h$ is invertible in $R$, i.e. $h(\alpha \bar{X})$ is invertible in $\bar{k}[X,1/\det(X)]/\bI_{\bar{k}}(H^\circ)$. Equivalently, both $h(\alpha \bar{X})$ and $1/h(\alpha \bar{X})$ are regular functions on $H^\circ(\bar{k})$. Since $H^\circ(\bar{k})$ is a connected algebraic subgroup of $\GL_n(\bar{k})$. Rosenlicht's Theorem (see \cite{magid,singer} for instance) implies that $h(\alpha X)=r\chi(X)$ for some $r\in \bar{k}$ and some character $\chi$ of $H^\circ$. That is to say, $h(\bar{X})=r\chi(\alpha^{-1} \bar{X})$.
\end{proof}

Denote by $\bfchi(H^\circ)$ the group of characters of $H^\circ$ and assume that $\{\chi_1,\cdots,\chi_l\}$ is a set of generators of $\bfchi(H^\circ)$. By Proposition~\ref{PROP:torsors}, $\chi_i(\alpha^{-1}\bar{X})$ is hyperexponential over $\bar{k}$. Let $r_i$ be the certificate of $\chi_i(\alpha^{-1}\bar{X})$ for all $i=1,\cdots,l$. Set
$$
    \bfZ=\left\{ (m_1,\cdots,m_l)\in \bZ^l \left|  \exists\,f\in \bar{k}\setminus\{0\}\,s.t.\,\sum_{i=1}^l m_i r_i=\frac{\delta(f)}{f} \right.\right\}.
$$
Then $\bfZ$ is a finitely generated $\bZ$-module. Let $\{\bfm_1,\cdots,\bfm_\ell\}$ be a set of generator of $\bfZ$ and $f_i\in \bar{k}\setminus\{0\}$ satisfy that $\sum_{j=1}^l m_{i,j}r_j=\delta(f_i)/f_i$, where $\bfm_i=(m_{i,1},\cdots,m_{i,l})$. Let $\bar{\F}$ be as in the proof of Proposition~\ref{PROP:torsors}. Then $\bar{\F}$ is a fundamental matrix of (\ref{EQ:differentialeqn}) and for all $i=1,\cdots,\ell$,
$$
   \frac{\delta\left(\prod_{j=1}^l \chi_j(\alpha^{-1}\bar{\F})^{m_{i,j}}\right)}{\prod_{j=1}^l \chi_j(\alpha^{-1}\bar{\F})^{m_{i,j}}}=\frac{\delta(f_i)}{f_i}.
$$
Thus $\prod_{j=1}^l \chi_j(\alpha^{-1}\bar{\F})^{m_{i,j}}=c_if_i$ for some constant $c_i$ in $\bar{k}(\bar{\F})$ that is $C$. Now let $J$ be the radical ideal in $\bar{k}[X,1/\det(X)]$ generated by
$$
    Q_1\cup \left\{\left.\prod_{j=1}^l \chi_j(\alpha^{-1}X)^{m_{i,j}}-c_if_i \right| i=1,\cdots,\ell\right\}.
$$
\begin{proposition}
\label{PROP:algclosedmaximal}
$J$ is a maximal $\delta$-ideal in $\bar{k}[X,1/\det(X)]$.
\end{proposition}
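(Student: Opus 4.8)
The plan is to prove that $J$ equals the ideal $\mathfrak{n}:=\{\,P\in\bar{k}[X,1/\det(X)]\mid P(\bar{\F})=0\,\}$ of relations of $\bar{\F}$ over $\bar{k}$. Recall that $\mathfrak{n}$ is a maximal $\delta$-ideal: the map $P\mapsto P(\bar{\F})$ identifies $\bar{k}[X,1/\det(X)]/\mathfrak{n}$ with $\bar{k}[\bar{\F},1/\det(\bar{\F})]$, which is a Picard--Vessiot ring of \eqref{EQ:differentialeqn} over $\bar{k}$ and hence $\delta$-simple; moreover, by Theorem~1.28 of \cite{vanderput-singer}, $\bV_{\bar{k}}(\mathfrak{n})$ is a torsor over $\bar{k}$ for $\stab(\mathfrak{n})=G^\circ$, where $G=\stab(\I(\F,\infty))$ is the Galois group of \eqref{EQ:differentialeqn} over $k$ and $G^\circ$ the Galois group over $\bar{k}$. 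It is routine that $J$ is a proper $\delta$-ideal: it is proper since $\bar{\F}$ is a zero of $J$; it is a $\delta$-ideal because the ideal generated by its listed generators is $\delta$-stable --- $Q_1$ is, and writing $u_i$ for the image of $\prod_{j}\chi_j(\alpha^{-1}X)^{m_{i,j}}$ in $R=\bar{k}[X,1/\det(X)]/Q_1$, Proposition~\ref{PROP:relations} gives $\delta(u_i)=\bigl(\sum_{j}m_{i,j}r_j\bigr)u_i=\tfrac{\delta(f_i)}{f_i}u_i$, so $\delta(u_i-c_if_i)=\tfrac{\delta(f_i)}{f_i}(u_i-c_if_i)$ in $R$, whence $\delta$ of the $i$-th extra generator lies in $Q_1$ together with that generator --- and in characteristic zero the radical of a $\delta$-ideal is a $\delta$-ideal. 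Finally $J\subseteq\mathfrak{n}$, since every generator of $J$ vanishes at $\bar{\F}$: those in $Q_1$ because $\bar{\F}\in\bV_{\bar{k}}(Q_1)$, and $\prod_{j}\chi_j(\alpha^{-1}X)^{m_{i,j}}-c_if_i$ by the choice of $c_i$. As $J$ is radical by construction and $\mathfrak{n}$ is the kernel of a homomorphism into a domain, it now suffices to prove $\bV_{\bar{k}}(J)=\bV_{\bar{k}}(\mathfrak{n})$.

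Next I would read off $\bV_{\bar{k}}(J)$ as a torsor. Writing a point of $\bV_{\bar{k}}(Q_1)=\alpha H^\circ(\bar{k})$ as $\alpha g$ with $g\in H^\circ(\bar{k})$, the extra equations become $\prod_{j}\chi_j(g)^{m_{i,j}}=c_if_i$; hence $\bV_{\bar{k}}(J)$ is the $\alpha$-translate of a fibre of the homomorphism $\psi:=(\chi^{\bfm_1},\dots,\chi^{\bfm_\ell})\colon H^\circ\to\Gm^\ell$, where $\chi^{\bfm_i}:=\prod_{j}\chi_j^{m_{i,j}}$. This fibre is nonempty --- it contains $\alpha^{-1}\bar{\F}$ over $\bar{k}(\F)$, hence a point over the algebraically closed field $\bar{k}$ --- so it is a coset of the normal closed subgroup $W:=\ker\psi=\bigcap_i\ker(\chi^{\bfm_i})$ of $H^\circ$, and $\bV_{\bar{k}}(J)$ is a $W$-torsor over $\bar{k}$. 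Comparing it with the $G^\circ$-torsor $\bV_{\bar{k}}(\mathfrak{n})\subseteq\bV_{\bar{k}}(J)$ inside $\alpha H^\circ(\bar{k})$ yields $G^\circ\subseteq W\subseteq H^\circ$, so we are reduced to proving $W=G^\circ$. The crux is the following: \emph{for $\bfm=(m_1,\dots,m_l)\in\bZ^l$, the character $\chi^{\bfm}=\prod_j\chi_j^{m_j}$ of $H^\circ$ restricts trivially to $G^\circ$ if and only if $\bfm\in\bfZ$.} Granting this, since $\chi_1,\dots,\chi_l$ generate $\bfchi(H^\circ)$ every character of $H^\circ$ is some $\chi^{\bfm}$, so $\{\chi^{\bfm}:\bfm\in\bfZ\}$ is exactly the group of characters of $H^\circ$ that vanish on $G^\circ$; as $\bfm_1,\dots,\bfm_\ell$ generate $\bfZ$, $W$ is the intersection in $H^\circ$ of the kernels of all such characters. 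Since $H$ is a toric envelope of $G$, $G^\circ$ is normal in $H^\circ$ and $H^\circ/G^\circ$ is a torus, so the characters of $H^\circ$ vanishing on $G^\circ$ are the pullbacks of the characters of $H^\circ/G^\circ$; a torus being separated by its characters, the intersection of their kernels in $H^\circ$ is $\ker(H^\circ\to H^\circ/G^\circ)=G^\circ$. Hence $W=G^\circ$, and $J=\mathfrak{n}$ is a maximal $\delta$-ideal.

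It remains to prove the crux. To $\chi^{\bfm}$ attach $v_{\bfm}:=\prod_j\chi_j(\alpha^{-1}\bar{\F})^{m_j}\in\bar{k}(\F)^{\times}$. Applying the $\delta$-homomorphism $R\to\bar{k}(\F)$, $X\mapsto\bar{\F}$, to the identity $\delta(\chi_j(\alpha^{-1}X))=r_j\,\chi_j(\alpha^{-1}X)$ of Proposition~\ref{PROP:relations} shows that $\chi_j(\alpha^{-1}\bar{\F})$ is hyperexponential over $\bar{k}$ with certificate $r_j$, whence $\delta(v_{\bfm})=\bigl(\sum_j m_jr_j\bigr)v_{\bfm}$. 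Now $\bar{k}(\F)$ is the Picard--Vessiot field of \eqref{EQ:differentialeqn} over $\bar{k}$ (it adjoins no new constants), with Galois group $G^\circ$ acting by $\sigma(\bar{\F})=\bar{\F}[\sigma]$, $[\sigma]\in G^\circ$; since the $\chi_j$ are characters of $H^\circ$ (defined over $C$), a direct computation gives $\sigma(v_{\bfm})=\chi^{\bfm}([\sigma])\,v_{\bfm}$. Consequently, if $\chi^{\bfm}|_{G^\circ}\equiv 1$ then $v_{\bfm}$ is fixed by the whole Galois group, so $v_{\bfm}\in\bar{k}$ by the Galois correspondence, and $\sum_j m_jr_j=\delta(v_{\bfm})/v_{\bfm}$ exhibits $\bfm\in\bfZ$; conversely, if $\bfm\in\bfZ$, say $\sum_j m_jr_j=\delta(f)/f$ with $f\in\bar{k}^{\times}$, then $v_{\bfm}/f$ is a constant, hence lies in $C^{\times}$, so $v_{\bfm}=cf$ is Galois-invariant and therefore $\chi^{\bfm}([\sigma])=1$ for every $\sigma$, i.e. $\chi^{\bfm}|_{G^\circ}\equiv 1$.

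The main obstacle is exactly this crux: recognizing the $\bZ$-module $\bfZ$ of integer relations among the certificates $r_j$ that are logarithmic derivatives over $\bar{k}$ as precisely the lattice of characters of $H^\circ$ vanishing on the connected Galois group $G^\circ$. This is where the Galois correspondence for the Picard--Vessiot extension over $\bar{k}$ and the absence of new constants are essential, and where a little care is needed in passing between $\bar{k}(\F)$ and $\bar{k}$. The two auxiliary inputs --- that $\bar{k}(\F)$ is a Picard--Vessiot field over $\bar{k}$ with Galois group $G^\circ$, and that $H^\circ/G^\circ$ is a torus since $H$ is a toric envelope of $G$ --- are standard, the latter following from \cite{amzallag-minchenko-pogudin}.
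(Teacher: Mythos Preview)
Your proof is correct and takes essentially the same route as the paper: both identify $J$ with the maximal $\delta$-ideal $\tilde{J}$ (your $\mathfrak{n}$) of relations of $\bar{\F}$ over $\bar{k}$ by showing, via the torsor descriptions inside $\alpha H^\circ$, that the characters $\chi^{\bfm_i}$ cut out exactly $G^\circ$ in $H^\circ$. The only difference is packaging: the paper picks $\tilde{\alpha}\in\bV_{\bar{k}}(\tilde{J})$, observes that each character $\chi$ in a defining set for $\tilde{G}=G^\circ$ satisfies $\chi(\alpha^{-1}\bar{\F})=\chi(\alpha^{-1}\tilde{\alpha})\in\bar{k}$ (hence its exponent vector lies in $\bfZ$) and then checks $\chi(\tilde{g})=1$ pointwise, whereas you formulate the same content as the biconditional $\chi^{\bfm}|_{G^\circ}\equiv1\Longleftrightarrow\bfm\in\bfZ$ proved through the Galois correspondence for $\bar{k}(\F)/\bar{k}$ and then conclude via characters separating points on the torus $H^\circ/G^\circ$.
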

\begin{proof}
Denote $\tilde{J}=\{P\in \tilde{k}[X,1/\det(X)] | P(\bar{\F})=0\}$ and $\tilde{G}=\stab(\tilde{J})$. Then $\tilde{J}$ is a maximal $\delta$-ideal containing $J$ and $\tilde{G}$ is the identity component of the Galois group $G$ of (\ref{EQ:differentialeqn}) over $k$. We shall prove that $J=\tilde{J}$. Let $\tilde{\alpha}\in \bV_{\bar{k}}(\tilde{J})$. Then by Proposition~\ref{PROP:torsors}, $\bV_{\bar{k}}(\tilde{J})=\tilde{\alpha} \tilde{G}(\bar{k})$ and $\bV_{\bar{k}}(Q_1)=\tilde{\alpha}H^\circ(\bar{k})$. So $\tilde{G}\subset H^\circ$. Because $H$ is a toric envelope of $G$, one sees that $H^\circ$ is a toric envelope of $\tilde{G}$  By Lemma 4.1 of \cite{amzallag-minchenko-pogudin} and  Proposition 2.6 of \cite{feng2}, $\tilde{G}$ is defined by some characters of $H^\circ$, i.e. $\tilde{G}=\cap_{\chi\in S} \ker(\chi)$ where $S$ is a finite subset of $\bfchi(H^\circ)$. Suppose that $\chi\in S$. Since $\tilde{\alpha}^{-1}\bar{\F}\in \tilde{G}(\bar{k}(\bar{\F}))$, $\chi(\tilde{\alpha}^{-1}\bar{\F})=1$. So $\chi(\alpha^{-1}\bar{\F})=\chi(\alpha^{-1}\tilde{\alpha})$. Write $\chi=\prod_{i=1}^l \chi_i^{d_i}$ with $d_i\in \bZ$. One then has that
\begin{align*}
  \frac{\delta\left(\prod_{i=1}^l \chi_i(\alpha^{-1}\bar{\F})^{d_i}\right)}{\prod_{i=1}^l \chi_i(\alpha^{-1}\bar{\F})^{d_i}}=\sum_{i=1}^l d_i\frac{\delta\left(\chi_i(\alpha^{-1}\bar{\F})\right)}{\chi_i(\alpha^{-1}\bar{\F})}
  =\sum_{i=1}^l d_i r_i=\frac{\delta\left(\chi(\alpha^{-1}\tilde{\alpha})\right)}{\chi(\alpha^{-1}\tilde{\alpha})}.
\end{align*}
This implies that $(d_1,\cdots,d_l)\in \bfZ$. Write $(d_1,\cdots,d_l)=\sum_{j=1}^\ell s_j \bfm_j$, where $s_j\in \bZ$. Now for any $g\in \bV_{\bar{k}}(J)$, one has that $\prod_{j=1}^l \chi_j(\alpha^{-1}g)^{m_{i,j}}=c_if_i$ for all $i=1,\cdots,\ell$ and then
\begin{align*}
  \chi(\alpha^{-1}g)&=\prod_{j=1}^l \chi_j(\alpha^{-1}g)^{d_i}=\prod_{j=1}^l \chi_j(\alpha^{-1}g)^{\sum_{i=1}^\ell s_i m_{i,j} }\\
   &=\prod_{i=1}^\ell \left(\prod_{j=1}^l \chi_j(\alpha^{-1}g)^{m_{i,j}}\right)^{s_i}=\prod_{i=1}^\ell (c_if_i)^{s_i}.\\
\end{align*}
In particular, one has that $\chi(\alpha^{-1}\tilde{\alpha})=\prod_{i=1}^\ell (c_if_i)^{s_i}$.
On the other hand, for any $g\in \bV_{\bar{k}}(J)$, one has that $g=\tilde{\alpha}\tilde{g}$ for some $\tilde{g}\in H^\circ(\bar{k})$ and then
$$
 \chi(\alpha^{-1}\tilde{\alpha})=\prod_{i=1}^\ell (c_if_i)^{s_i}= \chi(\alpha^{-1}g)=\chi(\alpha^{-1}\tilde{\alpha}\tilde{g})=\chi(\alpha^{-1}\tilde{\alpha})\chi(\tilde{g}).
$$
This implies that $\chi(\tilde{g})=1$. By the choice of $\chi$, one sees that $\tilde{g}\in \tilde{G}(\bar{k})$. Consequently, $\bV_{\bar{k}}(J)\subset \tilde{\alpha}\tilde{G}(\bar{k})=\bV_{\bar{k}}(\tilde{J})$. Since $J$ is radical, $\tilde{J}\subset J$. Thus $J=\tilde{J}$.
\end{proof}

 Given $\calH=\{\chi_1(\alpha^{-1}\bar{X}), \cdots, \chi_l(\alpha^{-1}\bar{X})\}$, a method described in \cite{compoint-singer} allows one to compute a set of generators of $\bfZ$, say $\{\bfm_1,\cdots,\bfm_\ell\}$ and the corresponding $f_1,\cdots,f_\ell$. Thus in order to compute the maximal $\delta$-ideal $J$, it suffices to compute $\calH$. Proposition~\ref{PROP:relations} indicates that $\calH$ can be obtained via computing suitable hyperexponential elements in $R$. Remark that for any two $\chi_1, \chi_2\in \bfchi(H^\circ)$, $\chi_1(\alpha^{-1}\bar{X})$ and $\chi_2(\alpha^{-1}\bar{X})$ are similar if and only if $\chi_1=\chi_2$. To see this, assume that $\chi_1(\alpha^{-1}\bar{X})=r\chi_2(\alpha^{-1}\bar{X})$ for some $r\in \bar{k}$. Then taking $\bar{X}=\alpha$, one sees that $r=1$ and thus $\chi_1=\chi_2$. Hence elements in $\H$ are not similar to each other and take value 1 at $\bar{X}=\alpha$. Furthermore, by Proposition B.17 of \cite{feng}, there are generators of $\bfchi(H^\circ)$ that can be represented by polynomials in $C[X]$ with degree not greater than
 $$
    \kappa=(2n)^{3\cdot8^{n^2}}{n^2+(2n)^{3\cdot8^{n^2}} \choose n^2} \max_i\left\{{{n^2+(2n)^{3\cdot8^{n^2}} \choose n^2} \choose i}^2\right\}.
 $$
 A small modification of the method developed in \cite{feng2} enables us to compute $\calH$. Denote $R_{\leq \kappa}=\bar{k}[X]_{\leq \kappa}/(Q_1\cap \bar{k}[X])_{\leq \kappa}$. Then $R_{\leq \kappa}$ is a $\bar{k}$-vector space of finite dimension. Moreover $R_{\leq \kappa}$ is a $\delta$-vector space, i.e. $\delta(R_{\leq \kappa})\subset R_{\leq \kappa}$. Assume that $\{P_1,\cdots,P_s\}$ is a basis of $R_{\leq \kappa}$ where $s=\dim(R_{\leq \kappa})$. Then there is a $s\times s$ matrix $B$ with entries in $\bar{k}$ such that
 \[
    \delta\begin{pmatrix} P_1 \\ \vdots \\ P_s\end{pmatrix}=B\begin{pmatrix} P_1 \\ \vdots \\ P_s\end{pmatrix}.
 \]
 Suppose that $h=\sum_{i=1}^s c_i P_i$ with $c_i\in \bar{k}$ is hyperexponential over $\bar{k}$, i.e. $\delta(h)=rh$ for some $r\in \bar{k}$. An easy calculation yields that
 \[
      \delta\begin{pmatrix} c_1 \\ \vdots \\ c_s \end{pmatrix}-r\begin{pmatrix} c_1 \\ \vdots \\ c_s \end{pmatrix}=-B^t\begin{pmatrix} c_1 \\ \vdots \\ c_s \end{pmatrix}
 \]
 where $B^t$ denotes the transpose of $B$. Let $\tilde{h}$ be a hyperexponential element over $\bar{k}$ satisfying that $\delta(\tilde{h})=-r\tilde{h}$ and $\bfc=(c_1,\cdots,c_s)^t$. Then $\bfc \tilde{h}$ is a hyperexponential solution of $\delta(Y)=-B^tY$. The algorithm developed in \cite{singer2} allows us to compute a $C$-basis of the solution space of $\delta(Y)=-B^t Y$ that consists of hyperexponential solutions, say $\bfc_1 h_1,\cdots, \bfc_d h_d$. Write $\bfc_i=(c_{i,1},\cdots,c_{i,s})$. Then $\sum_{j=1}^s c_{i,j}P_j$ is a hyperexponential element in $R$ and one easily sees that $\sum_{j=1}^s c_{i,j}P_j$ and $\sum_{j=1}^s c_{i',j}P_j$ are similar if and only if $\bfc_i=r\bfc_{i'}$ for some nonzero $r\in \bar{k}$. Without loss of generality, we may assume that $\{\bfc_1,\cdots,\bfc_{d'}\}$ is a $\bar{k}$-basis of the vector space over $\bar{k}$ spanned by $\bfc_1,\cdots,\bfc_d$. Set $\bar{h}_i=\sum_{j=1}^s c_{i,j}P_j$ for all $i=1,\cdots,d'$. Multiplying a suitable element in $\bar{k}$, we may assume that $\bar{h}_i$ takes value 1 at $\bar{X}=\alpha$. Then $\{\bar{h}_1,\cdots,\bar{h}_{d'}\}$ is the required set.
 \begin{remark}
 One has that for a character $\chi$ of $H^\circ$, $\chi(\alpha^{-1}\bar{X})$ is actually a hyperexponential element over $k(\alpha)$ in $k(\alpha)[X,1/\det(X)]/\tilde{Q}_1$. So in practice computation, one only need to compute solutions that are hyperexponential over $k(\alpha)$.
 \end{remark}

 \section{Maximal $\delta$-ideals in $k[X,1/{\rm det}(X)]$}
 The previous section enables us to compute a maximal $\delta$-ideal in $\bar{k}[X,1/\det(X)]$. Now assume that we have such a maximal $\delta$-ideal $\bar{I}$ in hand. Suppose that $\bar{I}$ is generated by a finite set $S\subset \bar{k}[X,1/\det(X)]$. Let $\tilde{k}$ be a finite Galois extension of $k$ such that $S\subset \tilde{k}[X,1/\det(X)]$. We define the action of $\Gal(\tilde{k}/k)$ on an element $f\in S$ to be the action of $\Gal(\tilde{k}/k)$ to the coefficients of $f$. Assume that $\{\rho_1(S),\cdots,\rho_d(S)\}$ is the orbit of $S$ under the action of $\Gal(\tilde{k}/k)$. Denote
 $$
     \frakm=k[X,1/\det(X)]\cap _{i=1}^d \langle \rho_i(S)\rangle_{\tilde{k}}
 $$
 where $\langle \rho_i(S) \rangle_{\tilde{k}}$ denotes the ideal in $\tilde{k}[X,1/\det(X)]$ generated by $\rho_i(S)$. We claim that $\frakm$ is a maximal $\delta$-ideal in $k[X,1/\det(X)]$. Since $\cap_{i=1}^d\langle \rho_i(S)\rangle_{\tilde{k}}$ is invariant under the action of $\Gal(\tilde{k}/k)$, one has that
 $$
    \langle \frakm \rangle_{\tilde{k}}=\cap_{i=1}^d\langle \rho_i(S)\rangle_{\tilde{k}}.
 $$
 Let $I$ be a maximal $\delta$-ideal in $k[X,1/\det(X)]$ containing $\frakm$. Decompose $\langle I \rangle_{\tilde{k}}$ into prime ideals in $\tilde{k}[X,1/\det(X)]$:
 $$
    \langle I \rangle_{\tilde{k}}=Q_1\cap \cdots \cap Q_s.
 $$
 Since $\langle I \rangle_{\tilde{k}}$ is a $\delta$-ideal, all $Q_i$ are $\delta$-ideals. Furthermore, one can verify that $\{Q_1,\cdots,Q_s\}$ is the orbit of $Q_1$ under the action of $\Gal(\tilde{k}/k)$. Because  $\langle \frakm \rangle_{\tilde{k}}\subset \langle I \rangle_{\tilde{k}}$, 
 there exists $\rho\in \Gal(\tilde{k}/k)$ such that $\langle \rho(S) \rangle_{\tilde{k}}\subset Q_1$. Remark that $\langle \rho(S)\rangle_{\tilde{k}}$ is a maximal $\delta$-ideal as so is $\langle S \rangle_{\tilde{k}}$. Thus $\langle \rho(S) \rangle_{\tilde{k}}=Q_1$. Therefore
 $$
     \langle \frakm \rangle_{\tilde{k}}=\cap_{i=1}^d\langle \rho_i(S)\rangle_{\tilde{k}}=Q_1\cap \cdots \cap Q_s= \langle I \rangle_{\tilde{k}}
 $$
 which implies that $\frakm=I$, because $\frakm=\langle \frakm \rangle_{\tilde{k}}\cap k[X,1/\det(X)]$ and $I=\langle I \rangle_{\tilde{k}}\cap k[X,1/\det(X)]$. This proves our claim.
\bibliographystyle{plain}

 \end{document}